\newcommand*{\defeq}{\stackrel{\mathsmaller{\mathsf{def}}}{=}}
\newcommand{\w}{\omega}
\newcommand{\IN}{\mathbb N}
\newcommand{\two}{\mathbbm 2}
\newcommand{\IZ}{\mathbb Z}
\newtheorem{theorem}{Theorem}[section]
\newtheorem{proposition}[theorem]{Proposition}
\newtheorem{corollary}[theorem]{Corollary}
\theoremstyle{definition}
\newtheorem{remark}[theorem]{Remark}
\title{The binary quasiorder on  semigroups}
\author{Taras Banakh and Olena Hryniv}
\address{Ivan Franko National University of Lviv, Universytetska 1, 79000, Lviv, Ukraine}
\email{t.o.banakh@gmail.com, ohryniv@gmail.com}
\subjclass{20M10}
\keywords{$E$-central semigroup, the least semilattice congruence, the binary quasiorder}
\begin{document}
\begin{abstract} Given two elements $x,y$ of a semigroup $X$ we write $x\lesssim y$ if for every homomorphism $\chi:X\to\{0,1\}$ we have $\chi(x)\le\chi(y)$. The quasiorder $\lesssim$ is called the {\em binary quasiorder} on $X$. It induces the equivalence relation $\Updownarrow$ that coincides with the least semilattice congruence on $X$. In the paper we discuss some known and new properties of the binary quasiorder on semigroups.
\end{abstract}



\maketitle

\section{Introduction}

In this paper we study the binary quasiorder on semigroups. 
 Every semigroup carries many important quasiorders (for example, those generated by the Green relations). One of them is the binary quasiorder $\lesssim$ defined as follows. Given two elements $x,y$ of a semigroup $X$ we write $x\lesssim y$ if $\chi(x)\le\chi(y)$ for any homomorphism $\chi:X\to\{0,1\}$. On every semigroup $X$ the binary quasiorder generates a congruence, which coincides with the least semilattice congruence, and decomposes the semigroup into a semilattice of semilattice-indecomposable semigroups. This fundamental decomposition result was proved by Tamura \cite{Tam56} (see also \cite{Petrich63}, \cite{Petrich64}, \cite{TS66}). Because of its fundamental importance, the least semilattice congruence has been deeply studied by many mathematicians, see the papers \cite{CB93}, \cite{CB}, \cite{Galbiati}, \cite{Gigon}, \cite{MRV}, \cite{PBC}, \cite{Putcha73}, \cite{Putcha74}, \cite{PW}, \cite{Petrich63}, \cite{Petrich64}, \cite{Sulka}, \cite{TK54}, \cite{Tamura73}, \cite{Tamura82}, surveys \cite{Mitro2004}, \cite{MS}, and monographs \cite{BCP}, \cite{Mitro2003},  \cite{Petrich73}.  
 The aim of this paper is to provide a survey of  known and new results on the binary quasiorder and the least semilattice congruence on semigroups. The obtained results will be applied in the theory of categorically closed semigroups developed by the first author in collaboration with Serhii Bardyla, see \cite{BB1,BB2,BB3,BB4,BB5}.

\section{Preliminaries}

In this section we collect some standard notions that will be used in the paper. We refer to \cite{Howie} for Fundamentals of Semigroup Theory. 

We denote by $\w$  the set of all finite ordinals and by $\IN\defeq\w\setminus\{0\}$ the set of all positive integer numbers.

 A {\em semigroup} is a set endowed with an associative binary operation. A semigroup $X$ is called a {\em semilattice}  if $X$ is commutative and every element $x\in X$ is an {\em idempotent} which means $xx=x$. Each semilattice $X$ carries the {\em natural partial order} $\leq$ defined by $x\le y$ iff $xy=x$.
 For a semigroup $X$ we denote by $E(X)\defeq\{x\in X:xx=x\}$ the set of idempotents of $X$.


Let $X$ be a semigroup. For an element $x\in X$ let $$x^\IN\defeq\{x^n:n\in\IN\}$$ be the monogenic subsemigroup of $X$, generated by the element $x$. For two subsets $A,B\subseteq X$, let $AB\defeq\{ab:a\in A,\;b\in B\}$ be the product of $A,B$ in $X$. 

For an element $a$ of a semigroup $X$, the set
$$H_a=\{x\in X:(xX^1=aX^1)\;\wedge\;(X^1x=X^1a)\}$$
is called the {\em $\mathcal H$-class} of $a$.
Here $X^1=X\cup\{1\}$ where $1$ is an element such that $1x=x=x1$ for all $x\in X^1$. By Corollary 2.2.6 \cite{Howie}, for every idempotent $e\in E(X)$ its $\mathcal H$-class $H_e$ coincides with the maximal subgroup of $X$, containing the idempotent $e$. 

\section{The binary quasiorder}

In this section we discuss the binary quasiorder on a semigroup and its relation to the least semilattice congruence. 

Let $\two$ denote the set $\{0,1\}$ endowed with the operation of multiplication inherited from the ring $\IZ$. It is clear that $\two$ is a two-element semilattice, so it carries the natural partial order, which coincides with the linear order inherited from $\IZ$.

For elements $x,y$ of a semigroup $X$ we write $x\lesssim y$ if $\chi(x)\le \chi(y)$ for every homomorphism $\chi:X\to\two$. It is clear that $\lesssim$ is a quasiorder on $X$. This quasiorder will be referred to as {\em the binary quasiorder} on $X$. The obvious order properties of the semilattice $\two$ imply the following (obvious) properties of the binary quasiorder on $X$.

\begin{proposition}\label{p:quasi2} For any semigroup $X$ and any elements $x,y,a\in X$, the following statements hold:
\begin{enumerate}
\item if $x\lesssim y$, then $ax\lesssim ay$ and $xa\lesssim ya$;
\item $xy\lesssim yx\lesssim xy$;
\item $x\lesssim x^2\lesssim x$;
\item $xy\lesssim x$ and $xy\lesssim y$.
\end{enumerate}
\end{proposition}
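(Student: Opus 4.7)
The plan is to prove each of the four items by unpacking the definition of $\lesssim$ and pushing the inequality through an arbitrary homomorphism $\chi : X \to \two$. Since $\two$ is a two-element semilattice with its natural order coinciding with the order inherited from $\IZ$, every element $a \in \two$ satisfies $a^2 = a$, $ab = ba$, and $ab \le \min\{a,b\}$. These three facts, one per item, supply essentially the whole proof; item (1) additionally uses monotonicity of multiplication by $\chi(a) \in \{0,1\}$ on $\two$.

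Concretely, I would fix an arbitrary homomorphism $\chi : X \to \two$ and argue as follows. For (1), from $\chi(x) \le \chi(y)$ and the fact that multiplication by $\chi(a) \in \{0,1\}$ preserves the order on $\two$, I would conclude $\chi(ax) = \chi(a)\chi(x) \le \chi(a)\chi(y) = \chi(ay)$, and symmetrically on the right. For (2), commutativity of $\two$ gives $\chi(xy) = \chi(x)\chi(y) = \chi(y)\chi(x) = \chi(yx)$, so both $xy \lesssim yx$ and $yx \lesssim xy$ follow at once. For (3), idempotency in $\two$ gives $\chi(x^2) = \chi(x)^2 = \chi(x)$, establishing both directions of the inequality. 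For (4), the inequalities $\chi(x)\chi(y) \le \chi(x)$ and $\chi(x)\chi(y) \le \chi(y)$ hold in $\two$ (case-check on whether each factor is $0$ or $1$), so $\chi(xy) \le \chi(x)$ and $\chi(xy) \le \chi(y)$.

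Since $\chi$ is an arbitrary homomorphism into $\two$ throughout, each of these pointwise inequalities is exactly the definition of the corresponding $\lesssim$ relation, and all four items follow. I do not expect any genuine obstacle here: the content of the proposition is simply that the semilattice structure of $\two$ (commutativity, idempotence, and the fact that products are bounded above by each factor) is transferred back to $X$ via the defining family of homomorphisms.
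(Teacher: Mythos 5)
Your proof is correct and follows exactly the route the paper intends: the paper gives no explicit proof, stating only that the proposition follows from ``the obvious order properties of the semilattice $\two$,'' and your argument simply spells out those properties (order-preservation of multiplication, commutativity, idempotence, and $ab\le\min\{a,b\}$ in $\two$) and transports them through an arbitrary homomorphism $\chi:X\to\two$.
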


For an element $a$ of a semigroup $X$ and subset $A\subseteq X$, consider the following sets:
$$
{\Uparrow}a\defeq\{x\in X:a\lesssim x\},\quad {\Downarrow}a\defeq\{x\in X:x\lesssim a\},\quad\mbox{and}\quad{\Updownarrow}a\defeq\{x\in X:a\lesssim x\lesssim a\},
$$
called the {\em upper $\two$-class}, {\em lower $\two$-class} and the {\em $\two$-class} of $x$, respectively. Proposition~\ref{p:quasi2} implies that those three classes are subsemigroups of $X$. 
 
For two elements $x,y\in X$ we write $x\Updownarrow y$ iff ${\Updownarrow}x={\Updownarrow}y$ iff $\chi(x)=\chi(y)$ for any homomorphism $\chi:X\to\two$. Proposition~\ref{p:quasi2} implies that $\Updownarrow$ is a congruence on $X$. 

 We recall that a {\em congruence} on a semigroup $X$ is an equivalence relation $\approx$ on $X$ such that for any elements $x\approx y$ of $X$ and any $a\in X$ we have $ax\approx ay$ and $xa\approx ya$. For any congruence $\approx$ on a semigroup $X$, the quotient set $X/_\approx$ has a unique semigroup structure such that the quotient map $X\to X/_\approx$ is a semigroup homomorphism. The semigroup $X/_\approx$ is called the {\em quotient semigroup} of $X$ by the congruence $\approx$~.

A congruence $\approx$ on a semigroup $X$ is called a {\em semilattice congruence} if the quotient semigroup $X/_\approx$ is a semilattice. Proposition~\ref{p:quasi2} implies that $\Updownarrow$ is a semilattice congruence on $X$. The intersection of all semilattice congruences on a semigroup $X$ is a semilattice congruence called the {\em least semilattice congruence},  denoted by $\eta$ in \cite{Howie}, \cite{HL}  (by $\xi$ in \cite{Tamura73}, \cite{Mitro2004}, and by $\rho_0$ in \cite{BCP}). The minimality of $\eta$ implies that $\eta\subseteq {\Updownarrow}$. The inverse inclusion ${\Updownarrow}\subseteq\eta$  will be deduced from the following (probably known) theorem on extensions of $\two$-valued  homomorphisms.

\begin{theorem}\label{t:extend} Let $\pi:X\to Y$ be a surjective homomorphism from a semigroup $X$ to a semilattice $Y$. For every subsemilattice $S\subseteq Y$ and homomorphism $f:\pi^{-1}[S]\to\two$ there exists a homomorphism $F:X\to\two$ such that $F{\restriction}_{\pi^{-1}[S]}=f$.
\end{theorem}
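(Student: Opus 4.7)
The plan is to apply Zorn's lemma, working with the bijection between homomorphisms $\chi\colon X\to\two$ and their \emph{prime} positive fibers (i.e.\ subsemigroups $U_\chi:=\chi^{-1}(1)\subseteq X$ with $ab\in U_\chi\Rightarrow a,b\in U_\chi$). Setting $U:=f^{-1}(1)\subseteq\pi^{-1}[S]$ and $J:=\pi^{-1}[S]\setminus U$ (a prime ideal of $\pi^{-1}[S]$), the task becomes: find a prime subsemigroup $V\subseteq X$ with $V\cap\pi^{-1}[S]=U$. The degenerate cases $U=\emptyset$ and $U=\pi^{-1}[S]$ are handled by $V=\emptyset$ (so $F\equiv 0$) and $V=X$ (so $F\equiv 1$), so I henceforth assume $U$ is a proper non-empty prime subsemigroup of $\pi^{-1}[S]$.

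I then apply Zorn's lemma to the inclusion-ordered family $\mathcal V$ of subsemigroups $V'\subseteq X$ satisfying $U\subseteq V'$ and $V'\cap J=\emptyset$. Since $U\in\mathcal V$ and $\mathcal V$ is closed under directed unions, Zorn yields a maximal element $V^*$, which automatically satisfies $V^*\cap\pi^{-1}[S]=U$. The crucial remaining step is to verify that $V^*$ is prime in $X$: supposing $ab\in V^*$ with $a,b\notin V^*$, maximality of $V^*$ forces each of $\langle V^*\cup\{a\}\rangle$ and $\langle V^*\cup\{b\}\rangle$ to meet $J$; I pick witnesses $w_a=v_0 a v_1 a\cdots a v_k$ and $w_b=u_0 b u_1 b\cdots b u_l$ in $J$ (with $v_i,u_j\in V^*\cup\{1\}$). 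Since $J$ is an ideal of $\pi^{-1}[S]$ and $w_a w_b\in\pi^{-1}[S]$, we get $w_a w_b\in J$, so $w_a w_b\notin V^*$; the desired contradiction will come from showing that, nevertheless, $w_a w_b\in V^*$.

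The main obstacle is precisely this last membership. A purely commutative argument would simply collect an $ab\in V^*$ factor out of $w_a w_b$; in our non-commutative setting, the semilattice structure of $Y$ must be invoked to compensate. Specifically, Proposition~\ref{p:quasi2}(2) asserts that every homomorphism $X\to\two$ is insensitive to transpositions of adjacent factors, so $\pi$ identifies $w_a w_b$ with any rearrangement of its letters, and in particular every such rearrangement lies in $\pi^{-1}[S]$. Translating this ``commutativity modulo $\pi$'' into an actual membership $w_a w_b\in V^*$ requires iteratively invoking the maximality of $V^*$ to fuse $ab$-factors inside $w_a w_b$ while carefully tracking that each intermediate element stays either inside $V^*$ or inside $\pi^{-1}[S]$ (and so controlled by the primality of $U$); this bookkeeping is the technical heart of the argument.
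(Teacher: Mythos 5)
Your reduction of the theorem to finding a prime coideal $V\subseteq X$ with $V\cap\pi^{-1}[S]=U$ is correct, and the Zorn setup (the family $\mathcal V$ is nonempty and closed under unions of chains, and any $V^*\in\mathcal V$ automatically meets $\pi^{-1}[S]$ exactly in $U$) is fine. But the proof has a genuine gap: the primality of the maximal element $V^*$ --- which you yourself call ``the technical heart'' --- is never established. The argument ends by describing what would have to be done (``iteratively invoking the maximality of $V^*$ to fuse $ab$-factors \dots this bookkeeping is the technical heart'') rather than doing it, so the crucial contradiction $w_aw_b\in V^*$ is simply asserted as a goal.

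Moreover, the mechanism you sketch does not suffice. In a \emph{commutative} semigroup one can finish: if $w_a$ contains $k$ letters $a$ and $w_b$ contains $l$ letters $b$, then $w_a^{\,l}w_b^{\,k}$ rearranges into $P\,(ab)^{kl}$ with $P\in V^*\cup\{1\}$, hence lies in $V^*$, while it also lies in the ideal $J$ of $\pi^{-1}[S]$ --- a contradiction. In the general case no such rearrangement is valid inside $X$, and ``commutativity modulo $\pi$'' only guarantees that all rearrangements of $w_aw_b$ have the same image in the semilattice $Y$, hence all lie in $\pi^{-1}[S]$; it gives no control over membership in $V^*$ or in $U$, because $f$ is multiplicative only along factorizations whose factors themselves lie in $\pi^{-1}[S]$, and the individual letters $a,b,v_i,u_j$ need not lie there. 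So the step from ``every rearrangement is in $\pi^{-1}[S]$'' to ``$w_aw_b\in V^*$'' is exactly where the non-commutativity bites, and nothing in the proposal bridges it. The paper's proof avoids Zorn's lemma entirely by exhibiting the extension explicitly: $F(x)=1$ iff there exists $z\in\pi^{-1}[S]$ with $\pi(xz)\in S$ and $f(xz)=1$; one then verifies directly that $F$ is a homomorphism restricting to $f$. If you wish to keep the maximality framework, you would need a genuinely new argument for the primality of $V^*$ (or to show that the explicit coideal just described is already what your construction must produce), and as it stands that argument is missing.
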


\begin{proof}  
We claim that the function
$F:X\to\two$ defined by
$$F(x)=\begin{cases}1&\mbox{if $\exists z\in\pi^{-1}[S]$ such that $\pi(xz)\in S$ and $f(xz)=1$};\\
0&\mbox{otherwise};
\end{cases}
$$is a required homomorphism extending $f$. 

To see that $F$ extends $f$, take any $x\in\pi^{-1}[S]$. If $f(x)=1$, then for $z=x$ we have $\pi(xz)=\pi(x)\pi(z)=\pi(x)\pi(x)=\pi(x)\in S$ and $f(xz)=f(x)f(z)=f(x)f(x)=1$ and hence $F(x)=1=f(x)$. If $F(x)=1$, then there exists $z\in \pi^{-1}[S]$ such that $\pi(xz)\in S$ and $f(x)f(z)=f(xz)=1$, which implies that $f(x)=1$. Therefore, $F(x)=1$ if and only if $f(x)=1$. Since $\two$ has only two elements, this implies that $f=F{\restriction}_{\pi^{-1}[S]}$.

To show that $F$ is a homomorphism, fix any elements $x_1,x_2\in X$.
We should prove that $F(x_1x_2)=F(x_1)F(x_2)$. 

First assume that $F(x_1x_2)=0$. If $F(x_1)$ or $F(x_2)$ equals $0$, then $F(x_1)F(x_2)=0$ and we are done. So, assume that $F(x_1)=1=F(x_2)$. Then the definition of $F$ yields elements $z_1,z_2\in \pi^{-1}[S]$ such that $\pi(x_iz_i)\in S$ and $f(x_iz_i)=1$ for every $i\in\{1,2\}$. Now consider the element $z=z_1z_2\in\pi^{-1}[S]$ and observe that $$\pi(x_1x_2z)=\pi(x_1x_2z_1z_2)=\pi(x_1)\pi(x_2)\pi(z_1)\pi(z_2)=\pi(x_1z_1)\pi(x_2z_2)\in S$$
and $f(z)=f(z_1z_2)=f(z_1)f(z_2)=1\cdot 1=1$ and hence $F(x_1x_2)=1$ by the definition of $F$. By this contradicts our assumption. 

Next, assume that $F(x_1x_2)=1$. Then there exists $z\in\pi^{-1}[S]$ such that $\pi(x_1x_2z)\in S$ and $f(x_1x_2z)=1$. Let $z'=x_1x_2z\in\pi^{-1}[S]$ and observe that for every $i\in\{1,2\}$ we have $\pi(x_iz')=\pi(x_i)\pi(x_1)\pi(x_2)\pi(z)=\pi(x_1)\pi(x_2)\pi(z)=\pi(x_1x_2z)\in S$.  It follows from $1=f(x_1x_2z)=f(x_1)f(x_2)f(z)=f(x_i)f(x_1)f(x_2)f(z)$ that $f(x_i)=1=f(z')$ and hence $F(x_i)=1$. Then $F(x_1)F(x_2)=1=F(x_1x_2)$, which completes the proof.
\end{proof}

\begin{corollary}\label{c:extend} Any homomorphism $f:S\to \two$ defined on a subsemilattice $S$ of a semilattice $X$ can be extended to a homomorphism $F:X\to\two$.
\end{corollary}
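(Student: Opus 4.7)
The plan is to obtain this corollary as an immediate consequence of Theorem~\ref{t:extend} by choosing a trivial quotient. Since $X$ is already a semilattice, it is itself a semigroup that maps onto a semilattice via the identity homomorphism, so there is nothing to factor out.

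Concretely, I would set $Y \defeq X$ and let $\pi \defeq \mathrm{id}_X : X \to Y$ be the identity map. This $\pi$ is trivially a surjective semigroup homomorphism from a semigroup (which happens to be a semilattice) to a semilattice. For the given subsemilattice $S \subseteq X$, the preimage $\pi^{-1}[S]$ is just $S$ itself, so the given homomorphism $f : S \to \two$ is already a homomorphism on $\pi^{-1}[S]$. Applying Theorem~\ref{t:extend} with these choices produces a homomorphism $F : X \to \two$ with $F{\restriction}_{\pi^{-1}[S]} = F{\restriction}_S = f$, which is exactly the desired extension.

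There is essentially no obstacle here: the entire content has already been done inside the proof of Theorem~\ref{t:extend}, and the corollary only records the special case $X = Y$, $\pi = \mathrm{id}_X$. The only thing worth noting in the write-up is the verification that $\mathrm{id}_X$ qualifies as a surjective homomorphism into a semilattice, which is immediate from the hypothesis that $X$ is a semilattice.
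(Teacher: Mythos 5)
Your proposal is correct and is exactly the paper's argument: the paper's proof also consists of applying Theorem~\ref{t:extend} to the identity homomorphism $\pi:X\to X$. Nothing further is needed.
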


\begin{proof} Apply Theorem~\ref{t:extend} to the identity homomorphism $\pi:X\to X$.
\end{proof}

Corollary~\ref{c:extend} implies the following important fact, first noticed by Petrich \cite{Petrich63}, \cite{Petrich64} and Tamura \cite{Tamura73}. 

\begin{theorem} The congruence $\Updownarrow$ on any semigroup $X$ coincides with the least semilattice congruence on  $X$.
\end{theorem}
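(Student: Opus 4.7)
The inclusion $\eta\subseteq{\Updownarrow}$ is already noted before the statement, since $\Updownarrow$ is itself a semilattice congruence (by Proposition~\ref{p:quasi2}) and $\eta$ is the least one. So the whole task is to establish the reverse inclusion ${\Updownarrow}\subseteq\eta$, i.e.\ to show that any two elements $x,y\in X$ with $x\mathbin{\not\eta}y$ can be separated by some homomorphism $\chi:X\to\two$.

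The plan is to pass to the quotient $Y\defeq X/_\eta$, which is by definition a semilattice, and to let $\pi:X\to Y$ be the quotient homomorphism. Assume $x\mathbin{\not\eta}y$, so $a\defeq\pi(x)$ and $b\defeq\pi(y)$ are distinct elements of the semilattice $Y$. Using the natural partial order on $Y$, at least one of $a\le b$ or $b\le a$ fails; swapping $x$ and $y$ if necessary, I assume $a\not\le b$, i.e.\ $ab\ne a$. Consider the two-element subsemilattice $S\defeq\{a,ab\}\subseteq Y$ and define $g:S\to\two$ by $g(a)=1$ and $g(ab)=0$. A direct check on the three products $a\cdot a$, $a\cdot ab$, $ab\cdot ab$ shows that $g$ is a homomorphism.

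By Corollary~\ref{c:extend}, $g$ extends to a homomorphism $G:Y\to\two$, and then $\chi\defeq G\circ\pi:X\to\two$ is a homomorphism with $\chi(x)=G(a)=1$. To conclude I need $\chi(y)=0$; if instead $\chi(y)=G(b)=1$, then $G(ab)=G(a)G(b)=1$, contradicting $G(ab)=g(ab)=0$. Hence $\chi(x)=1\ne 0=\chi(y)$, witnessing $x\not{\Updownarrow}y$.

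The only subtle point is the separation step for semilattices itself, which is where Corollary~\ref{c:extend} does the real work: without it, one would have to construct $G$ by hand on an arbitrary semilattice (something like ``$G(z)=1$ iff $z\ge a$''), and then verify both the homomorphism property and the separating property. Relying on the already-proved extension corollary turns that obstacle into the trivial homomorphism check on the two-point sub-semilattice $\{a,ab\}$, so the remaining argument is essentially bookkeeping.
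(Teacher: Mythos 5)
Your proposal is correct and follows essentially the same route as the paper: pass to the quotient semilattice $X/\eta$, separate the two distinct images by a $\two$-valued homomorphism defined on a small subsemilattice (you use $\{a,ab\}$ where the paper uses $\{\eta(x),\eta(y),\eta(x)\eta(y)\}$, a cosmetic difference), and extend it to all of $X/\eta$ via Corollary~\ref{c:extend} before composing with the quotient map. The only divergence is presentational: the paper argues by contradiction from $\eta\ne{\Updownarrow}$, while you prove the contrapositive inclusion directly.
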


\begin{proof} Let $\eta$ be the least semilattice congruence on $X$ and $\eta(\cdot):X\to X/\eta$ be the quotient homomorphism assigning to each element $x\in X$ its equivalence class $\eta(x)\in X/\eta$. We need to prove that $\eta(x)={\Updownarrow}x$ for all $x\in X$. Taking into account that ${\Updownarrow}$ is a semilattice congruence and $\eta$ is the least semilattice congruence on $X$, we conclude that $\eta\subseteq{\Updownarrow}$ and hence $\eta(x)\subseteq {\Updownarrow}x$ for all $x\in X$. Assuming that $\eta\ne{\Updownarrow}$, we can find elements $x,y\in X$ such that $x\Updownarrow y$ but $\eta(x)\ne \eta(y)$. Consider the subsemilattice $S=\{\eta(x),\eta(y),\eta(x)\eta(y)\}$ of the semilattice $X/\eta$. It follows from $\eta(x)\ne\eta(y)$ that $\eta(x)\eta(x)\ne\eta(x)$ or $\eta(x)\eta(y)\ne\eta(y)$. Replacing the pair $x,y$ by the pair $y,x$, we can assume that $\eta(x)\eta(y)\ne \eta(y)$. In this case the unique function $h:S\to\two$ with $h^{-1}(1)=\{\eta(y)\}$ is a homomorphism. By Corollary~\ref{c:extend}, the homomophism $h$ can be extended to a homomorphism $H:X/\eta\to\two$. Then the composition $\chi\defeq H\circ\eta(\cdot):X\to\two$ is a homomorphism such that $\chi(x)=0\ne 1=\chi(y)$, which implies that ${\Updownarrow}x\ne{\Updownarrow}y$. But this contradicts the choice of the points $x,y$. This contradicton completes the proof of the equality ${\Updownarrow}=\eta$.
\end{proof}

A semigroup $X$ is called {\em $\two$-trivial} if every homomorphism $h:X\to\two$  is constant. Tamura \cite{Tamura73}, \cite{Tamura82} calls $\two$-trivial semigroups {\em semilattice-indecomposable} (or briefy {\em $s$-indecomposable}) semigroups.

Theorem~\ref{t:extend} implies the following fundamental fact first proved by Tamura \cite{Tam56} and then reproved by another method in \cite{TS66}, see also \cite{Petrich63}, \cite{Petrich64}.

\begin{theorem}[Tamura]\label{t:Tamura} For every element $x$ of a semigroup $X$ its $\two$-class ${\Updownarrow}x$ is a $\two$-trivial semigroup.
\end{theorem}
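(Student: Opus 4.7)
The plan is to apply Theorem~\ref{t:extend} directly, using the canonical quotient $\pi:X\to X/\eta$ as the surjective homomorphism onto a semilattice. By the previous theorem, $\eta={\Updownarrow}$, so $X/\eta$ is a semilattice and $\pi^{-1}[\{\eta(x)\}]={\Updownarrow}x$. The singleton $S\defeq\{\eta(x)\}$ is trivially a subsemilattice of $X/\eta$ (since $\eta(x)$ is an idempotent in the semilattice $X/\eta$).

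To prove that ${\Updownarrow}x$ is $\two$-trivial, I would take an arbitrary homomorphism $h:{\Updownarrow}x\to\two$ and show $h$ is constant. Viewing $h$ as a homomorphism $f:\pi^{-1}[S]\to\two$, Theorem~\ref{t:extend} produces a homomorphism $F:X\to\two$ whose restriction to ${\Updownarrow}x$ is $h$.

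Now the definition of $\Updownarrow$ gives the punch line. Fix any $y_1,y_2\in{\Updownarrow}x$. Then $y_1\Updownarrow x\Updownarrow y_2$, so $y_1\Updownarrow y_2$. By the very definition of the congruence $\Updownarrow$, this means $\chi(y_1)=\chi(y_2)$ for every homomorphism $\chi:X\to\two$; applied to $\chi=F$ we get $h(y_1)=F(y_1)=F(y_2)=h(y_2)$. Hence $h$ is constant.

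There is essentially no obstacle: the work has already been done in Theorem~\ref{t:extend}. The only thing to verify is the matching of hypotheses (that $\{\eta(x)\}$ is a subsemilattice and that its preimage is precisely ${\Updownarrow}x$), after which the characterization of $\Updownarrow$ via homomorphisms into $\two$ closes the argument immediately.
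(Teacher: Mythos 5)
Your proof is correct and follows exactly the route the paper intends: the paper states Theorem~\ref{t:Tamura} as a direct consequence of the extension Theorem~\ref{t:extend}, and your argument (extend any $\two$-valued homomorphism on ${\Updownarrow}x=\pi^{-1}[\{\eta(x)\}]$ to all of $X$, then use that a global homomorphism into $\two$ is constant on each $\Updownarrow$-class by definition) is precisely that deduction, with the hypotheses of Theorem~\ref{t:extend} checked correctly.
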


Now we provide an inner description of the binary quasiorder via prime (co)ideals, following the approach of Petrich \cite{Petrich64} and Tamura \cite{Tamura73}.

A subset $I$ of a semigroup $X$ is called
\begin{itemize}
\item an {\em ideal} in $X$ if $(IX)\cup (XI)\subseteq I$;
\item a {\em prime ideal} if $I$ is an ideal such that $X\setminus I$ is a subsemigroup of $X$;
\item a ({\em prime}) {\em coideal} if the complement $X\setminus I$ is a (prime) ideal in $X$.
\end{itemize}
According to this definition, the sets $\emptyset$ and $X$ are prime (co)ideals in $X$.

Observe that a subset $A$ of a semigroup $X$ is a prime coideal in $X$ if and only if its {\em characteristic function} $$\chi_A:X\to\two,\quad \chi_A:x\mapsto\chi_A(x)\defeq\begin{cases}1&\mbox{if $x\in A$},\\
0&\mbox{otherwise},
\end{cases}
$$ is a homomorphism.  This function characterization of prime coideals implies the following inner description of the $\two$-quasiorder, first noticed by Tamura in \cite{Tamura73}.

\begin{proposition}\label{p:smallest-pi} For any element $x$ of a semigroup $X$, its upper $\two$-class ${\Uparrow}x$ coincides with the smallest coideal of $X$ that contains $x$. 
\end{proposition}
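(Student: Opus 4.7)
The plan is to prove two inclusions whose conjunction gives the characterization. First, I would verify that ${\Uparrow}x$ is itself a coideal containing $x$. Reflexivity of $\lesssim$ yields $x \in {\Uparrow}x$. For the coideal property, suppose $ab \in {\Uparrow}x$ for some $a, b \in X$: by Proposition~\ref{p:quasi2}(4) we have $ab \lesssim a$ and $ab \lesssim b$, so transitivity of $\lesssim$ combined with $x \lesssim ab$ forces $a, b \in {\Uparrow}x$. Equivalently, if $a$ (or $b$) lies in $X \setminus {\Uparrow}x$, then every product $ab$ and $ba$ lies there too, so $X \setminus {\Uparrow}x$ is an ideal.

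For the reverse inclusion, let $C$ be any coideal of $X$ with $x \in C$. The key tool is the characterization recorded in the paragraph immediately before the proposition: the characteristic function $\chi_C : X \to \two$ of $C$ is a semigroup homomorphism. Then $\chi_C(x) = 1$, and for any $y \in {\Uparrow}x$ the definition of the binary quasiorder forces $1 = \chi_C(x) \le \chi_C(y)$, so $\chi_C(y) = 1$ and $y \in C$. This proves ${\Uparrow}x \subseteq C$, so ${\Uparrow}x$ sits inside every coideal of $X$ containing $x$.

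The main conceptual step---and the place where the preceding material is genuinely used---is bridging the definition of ${\Uparrow}x$ (given in terms of arbitrary $\two$-valued homomorphisms) and the definition of a coideal (given in terms of the complementary ideal); the characteristic-function characterization just above the proposition is exactly this bridge, since it promotes each coideal $C$ to a concrete $\two$-valued homomorphism $\chi_C$ against which the inequality $\chi_C(x) \le \chi_C(y)$ can be tested. Once this is invoked, each of the two inclusions reduces to a one-line chase: the first uses Proposition~\ref{p:quasi2}(4), the second uses only the defining property of $\lesssim$. Combining them exhibits ${\Uparrow}x$ as the least element in the family of coideals of $X$ containing $x$, as claimed.
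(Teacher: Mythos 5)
Your first inclusion is fine: $x\in{\Uparrow}x$ by reflexivity, and Proposition~\ref{p:quasi2}(4) shows that $X\setminus{\Uparrow}x$ is an ideal, so ${\Uparrow}x$ is a coideal containing $x$. The gap is in the minimality half. The characteristic-function characterization you invoke applies to \emph{prime} coideals, not to arbitrary coideals: multiplicativity of $\chi_C$ in the case $\chi_C(a)=\chi_C(b)=1$ is precisely the requirement that $C$ be a subsemigroup, which a mere coideal need not satisfy. For a coideal that is not a subsemigroup, $\chi_C$ is not a homomorphism and the step $1=\chi_C(x)\le\chi_C(y)$ has no justification. Indeed, minimality of ${\Uparrow}x$ among \emph{all} coideals containing $x$ is false: in the additive semigroup $\IN$ the singleton $\{1\}$ is a coideal containing $1$ (its complement is an ideal), yet every homomorphism $\chi:\IN\to\two$ satisfies $\chi(n)=\chi(1)^n=\chi(1)$ and hence is constant, so ${\Uparrow}1=\IN\not\subseteq\{1\}$.

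The proposition is intended --- and is used later in the paper, e.g.\ in the proof of Proposition~\ref{p:Upclass} --- with ``smallest \emph{prime} coideal'' in place of ``smallest coideal''. With that reading your argument repairs easily. You must additionally check that ${\Uparrow}x$ is a subsemigroup: if $x\lesssim a$ and $x\lesssim b$, then $\chi(ab)=\chi(a)\chi(b)\ge\chi(x)\chi(x)=\chi(x)$ for every homomorphism $\chi:X\to\two$, so $ab\in{\Uparrow}x$ (the paper records this right after Proposition~\ref{p:quasi2}). Hence ${\Uparrow}x$ is a prime coideal containing $x$. Then restrict the minimality argument to prime coideals $C$ containing $x$; for these $\chi_C$ really is a homomorphism, and your one-line chase $1=\chi_C(x)\le\chi_C(y)$ for $y\in{\Uparrow}x$ gives ${\Uparrow}x\subseteq C$, which is exactly the bridge the paper has in mind when it says the characterization ``implies'' the proposition.
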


The following inner description of the upper $\two$-classes is a modified version of Theorem 3.3 in \cite{Petrich64}.

\begin{proposition}\label{p:Upclass} For any element $x$ of a semigroup $X$ its upper $\two$-class ${\Uparrow}x$ is equal to the union $\bigcup_{n\in\w}{\Uparrow}_{\!n}x$, where ${\Uparrow}_{\!0}x=\{x\}$ and $${\Uparrow}_{\!n{+}1}x\defeq\{y\in X:X^1yX^1\cap({\Uparrow}_{\!n}x)^2\ne \emptyset\}$$ for $n\in\w$. 
\end{proposition}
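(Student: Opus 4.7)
The plan is to establish both inclusions of ${\Uparrow}x=\bigcup_{n\in\w}{\Uparrow}_{\!n}x$ separately, using a straightforward induction for one direction and Proposition~\ref{p:smallest-pi} for the other.

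For $\bigcup_{n\in\w}{\Uparrow}_{\!n}x\subseteq{\Uparrow}x$, I would show ${\Uparrow}_{\!n}x\subseteq{\Uparrow}x$ by induction on $n$. The base case $n=0$ is immediate from reflexivity of $\lesssim$. For the inductive step, any $y\in{\Uparrow}_{\!n+1}x$ comes with $u,v\in X^1$ and $a,b\in{\Uparrow}_{\!n}x$ satisfying $uyv=ab$; given any homomorphism $\chi:X\to\two$ with $\chi(x)=1$, the inductive hypothesis yields $\chi(a)=\chi(b)=1$, so $\chi(uyv)=\chi(ab)=1$, which forces $\chi(y)=1$ (treating $\chi(1)=1$ for the adjoined unit when necessary). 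Hence $y\in{\Uparrow}x$.

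For the reverse inclusion, I would set $U\defeq\bigcup_{n\in\w}{\Uparrow}_{\!n}x$ and, by Proposition~\ref{p:smallest-pi}, reduce the task to showing that $U$ is a coideal of $X$ containing $x$. The containment $x\in{\Uparrow}_{\!0}x\subseteq U$ is trivial. To verify that $X\setminus U$ is an ideal, I would take $y\in X$ with $uyv\in U$ for some $u,v\in X^1$, say $uyv\in{\Uparrow}_{\!n}x$, and apply the key squaring trick
\[
(uyv)^2 \;=\; u\cdot y\cdot(vuyv) \;\in\; X^1yX^1\cap({\Uparrow}_{\!n}x)^2,
\]
which witnesses $y\in{\Uparrow}_{\!n+1}x\subseteq U$. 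This squaring trick is the only nontrivial step: it simultaneously places $y$ as a middle factor and lands the whole product in $({\Uparrow}_{\!n}x)^2$. Once it is spotted, the remaining bookkeeping, including the degenerate cases where $u$ or $v$ is the adjoined identity $1$, is routine.
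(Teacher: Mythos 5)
Your first inclusion and the squaring trick are both fine, but there is a genuine gap in the reduction used for the reverse inclusion. You reduce ${\Uparrow}x\subseteq U$ to the claim that $U\defeq\bigcup_{n\in\w}{\Uparrow}_{\!n}x$ is a \emph{coideal} containing $x$. That is not enough: the minimality of ${\Uparrow}x$ holds only among \emph{prime} coideals, i.e.\ subsemigroups whose complement is an ideal. The literal wording ``smallest coideal'' in Proposition~\ref{p:smallest-pi} cannot be taken at face value: in the additive semigroup $\IN$ the smallest coideal containing $1$ is the singleton $\{1\}$ (its complement $\{2,3,\dots\}$ is an ideal), whereas ${\Uparrow}1=\IN$ because $(\IN,+)$ is Archimedean and hence $\two$-trivial. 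The proposition rests on the fact that characteristic functions of \emph{prime} coideals are exactly the homomorphisms to $\two$, so ${\Uparrow}x$ is the smallest prime coideal containing $x$, and this is precisely the form in which the paper invokes it. A mere coideal containing $x$ need not contain ${\Uparrow}x$, so your argument as written does not close.

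The missing step is to verify that $U$ is also a subsemigroup. This is where the paper's proof spends part of its effort: it first checks that the sequence $({\Uparrow}_{\!n}x)_{n\in\w}$ is increasing (for $y\in{\Uparrow}_{\!n}x$ one has $yy\in X^1yX^1\cap({\Uparrow}_{\!n}x)^2$, so $y\in{\Uparrow}_{\!n{+}1}x$), and then for $y,z\in{\Uparrow}_{\!n}x$ it observes $yz\in X^1yzX^1\cap({\Uparrow}_{\!n}x)^2$, whence $yz\in{\Uparrow}_{\!n{+}1}x\subseteq U$. With this added, $U$ is a prime coideal containing $x$ and the minimality of ${\Uparrow}x$ applies. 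Everything else in your write-up is correct: the homomorphism-based induction for $\bigcup_{n\in\w}{\Uparrow}_{\!n}x\subseteq{\Uparrow}x$ is a mild rephrasing of the paper's coideal argument, and your identity $(uyv)^2=u\cdot y\cdot(vuyv)$ is a valid (and slightly different) way to see that $X\setminus U$ is an ideal, where the paper instead uses the inclusion $X^1zwX^1\subseteq X^1zX^1$ directly in the definition of ${\Uparrow}_{\!n{+}1}x$.
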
  

\begin{proof} Observe that for every $n\in\w$ and $y\in {\Uparrow}_{\!n}x$ we have $yy\in X^1yX^1\cap({\Uparrow}_{\!n}x)^2\ne\emptyset$ and hence $y\in {\Uparrow}_{\!n{+}1}x$. Therefore, $({\Uparrow}_{\!n}x)_{n\in\w}$ is an increasing sequence of sets. 
Also, for every $y,z\in{\Uparrow}_{\!n}x$ the we have $yz\in X^1yzX^1\cap({\Uparrow}_{\!n}x)^2$ and hence $yz\in {\Uparrow}_{\!n+1}x$, which implies that the union ${\Uparrow}_{\!\w}x\defeq\bigcup_{n\in\w}{\Uparrow}_{\!n}x$ is a subsemigroup of $X$. 

The definition of the sets ${\Uparrow}_{\!n}x$ implies that the complement $I=X\setminus{\Uparrow}_{\!\w}x$ is an ideal in $X$. Then ${\Uparrow}_{\!\w}x$ is a prime coideal in $X$. Taking into account that ${\Uparrow}x$ is the smallest prime coideal containing $x$, we conclude that ${\Uparrow}x\subseteq{\Uparrow}_{\!\w}x$. To prove that ${\Uparrow}_{\!\w}x\subseteq{\Uparrow}x$, it suffices to check that ${\Uparrow}_{\!n}x\subseteq{\Uparrow}x$ for every $n\in\w$. It is trivially true for $n=0$. Assume that for some $n\in\w$ we have already proved that ${\Uparrow}_{\!n}x\subseteq {\Uparrow}x$. Since ${\Uparrow}x$ is a coideal in $X$, for any $y\in X\setminus{\Uparrow}x$ we have $\emptyset=X^1yX^1\cap {\Uparrow}x\supseteq  X^1yX^1\cap{\Uparrow}_{\!n}x$, which implies that $y\notin{\Uparrow}_{\!n{+}1}x$ and hence ${\Uparrow}_{\!n{+}1}\subseteq{\Uparrow}x$. By the Principle of Mathematical Induction, ${\Uparrow}_{\!n}x\subseteq {\Uparrow}x$ for all $n\in\w$ and hence ${\Uparrow}_{\!\w}x=\bigcup_{n\in\w}{\Uparrow}_{\!n}x\subseteq {\Uparrow}x$, and finally ${\Uparrow}_{\!\w}x={\Uparrow}x$.
\end{proof}

For a positive integer $n$, let
$$2^{<n}\defeq\bigcup_{k<n}\{0,1\}^k\quad\mbox{and}\quad 2^{\le n}\defeq\bigcup_{k\le n}\{0,1\}^k.$$For a sequence $s=(s_0,\dots,s_{n-1})\in 2^n$ and a number $k\in\{0,1\}$, let $$s\hat{\;}k\defeq (s_0,\dots,s_{n-1},k)\quad\mbox{and}\quad k\hat{\;}s\defeq (k,s_0,\dots,s_{n-1}).$$
The following proposition provides a constructive description of elements of the sets ${\Uparrow}_{\!n}x$ appearing in Proposition~\ref{p:Upclass}.

\begin{proposition}\label{p:Upclass-tree} For every $n\in\IN$ and every element $x$ of a semigroup $X$, the set ${\Uparrow}_{\!n}x$ coincides with the set ${\Uparrow}_{\!n}'x$ of all elements $y\in X$ for which there exist sequences  $\{x_s\}_{s\in 2^{\le n}}$, $\{y_s\}_{s\in 2^{\le n}}\subseteq X$ and $\{a_{s}\}_{s\in 2^{\le n}},\{b_s\}_{s\in 2^{\le n}}\subseteq X^1$ satisfying the following conditions:
\begin{enumerate}
\item[$(1_n)$] $x_s=x$ for all $s\in 2^n$;
\item[$(2_n)$] $y_s=a_sx_sb_s$ for every $s\in 2^{\le n}$;
\item[$(3_n)$] $y_s=x_{s\hat{\;}0}x_{s\hat{\;}1}$ for every $s\in 2^{<n}$;
\item[$(4_n)$] $x_{()}=y$ for the unique element $()$ of $2^0$.
\end{enumerate}
\end{proposition}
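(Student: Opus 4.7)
The plan is to prove ${\Uparrow}_{\!n}x={\Uparrow}_{\!n}'x$ by induction on $n\in\IN$, driven by a recursive description of ${\Uparrow}_{\!n+1}'x$ that mirrors the defining recursion of ${\Uparrow}_{\!n+1}x$ from Proposition~\ref{p:Upclass}.

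The key step is the following \emph{splitting claim}: for every $n\in\IN$, an element $y\in X$ lies in ${\Uparrow}_{\!n+1}'x$ if and only if there exist $a,b\in X^1$ and $u,v\in{\Uparrow}_{\!n}'x$ with $ayb=uv$. For the ``only if'' direction, take witnessing families $\{x_s\},\{y_s\},\{a_s\},\{b_s\}$ indexed by $s\in 2^{\le n+1}$ and set $a\defeq a_{()}$, $b\defeq b_{()}$, $u\defeq x_{(0)}$, $v\defeq x_{(1)}$; conditions $(4_{n+1})$, $(2_{n+1})$ and $(3_{n+1})$ then give $ayb=a_{()}x_{()}b_{()}=y_{()}=x_{(0)}x_{(1)}=uv$. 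To see $u\in{\Uparrow}_{\!n}'x$, extract the subtree above $(0)$ via the reindexing $t\in 2^{\le n}\mapsto 0\hat{\;}t\in 2^{\le n+1}$, setting $x'_t\defeq x_{0\hat{\;}t}$ and analogously for $y'_t,a'_t,b'_t$; the conditions $(1_n)$--$(4_n)$ for the primed families follow directly from their unprimed counterparts. Symmetrically $v\in{\Uparrow}_{\!n}'x$. For the ``if'' direction, take depth-$n$ trees witnessing $u,v\in{\Uparrow}_{\!n}'x$ and assemble a depth-$(n+1)$ tree for $y$ by adjoining a new root with $x_{()}\defeq y$, $y_{()}\defeq uv$, $a_{()}\defeq a$, $b_{()}\defeq b$ and planting the two subtrees above the nodes $(0)$ and $(1)$; conditions $(1_{n+1})$--$(4_{n+1})$ are then routinely verified.

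With the splitting claim in hand, the induction is immediate. For the base case $n=1$, combining $(1_1)$--$(4_1)$ gives $a_{()}yb_{()}=x_{(0)}x_{(1)}=x\cdot x$, which is exactly the condition $X^1yX^1\cap\{x\}^2\ne\emptyset$ defining ${\Uparrow}_{\!1}x$. For the inductive step, assuming ${\Uparrow}_{\!n}x={\Uparrow}_{\!n}'x$, the splitting claim yields
\[
y\in{\Uparrow}_{\!n+1}'x\iff X^1yX^1\cap({\Uparrow}_{\!n}'x)^2\ne\emptyset\iff X^1yX^1\cap({\Uparrow}_{\!n}x)^2\ne\emptyset\iff y\in{\Uparrow}_{\!n+1}x.
\]
The main (and essentially only) obstacle is notational: the reindexings $t\mapsto k\hat{\;}t$ for $k\in\{0,1\}$ must be tracked across all four families $\{x_s\},\{y_s\},\{a_s\},\{b_s\}$, and each of $(1_n)$--$(4_n)$ must be checked for both the restriction (forward direction) and the assembly (reverse direction). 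Beyond this bookkeeping there is no conceptual subtlety.
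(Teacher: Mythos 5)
Your proposal is correct and follows essentially the same route as the paper: induction on $n$, with the inductive step carried out by restricting the witnessing tree to the two subtrees above the root (for one inclusion) and by grafting two depth-$n$ trees onto a new root (for the other), exactly as in the paper's proof. Your only organizational difference is to isolate this root-splitting as a standalone recursive characterization of ${\Uparrow}_{\!n+1}'x$ before invoking the induction hypothesis, whereas the paper interleaves the two; the mathematical content is identical.
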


\begin{proof} This proposition will be proved by induction on $n$. For $n=1$,we have
$$
\begin{aligned}
{\Uparrow}_1&\defeq\{y\in X:xx\in X^1yX^1\}=\{y\in X:\exists a,b\in X^1\;\;ayb=xx\}\\
&=\{y\in X:\exists \{x_s\}_{s\in 2^{\le 1}},\{y_s\}_{s\in 2^{\le 1}}\subseteq X,\; \{a_s\}_{a\in 2^{\le 1}},\{b_s\}_{s\in 2^{\le 1}}\subseteq X^1,\\
&\hskip60pt x_{(0)}=x_{(1)}=x,\;y_{()}=x_{(0)}x_{(1)},\;x_{()}=y,\;y_{()}=a_{()}x_{()}b_{()}\}={\Uparrow}'_{\!1}x.
\end{aligned}
$$
Assume that for some $n\in\IN$ the equality ${\Uparrow}_{\!n}x={\Uparrow}'_{\!n}x$ has been proved. To check that ${\Uparrow}_{\!n{+}1}x\subseteq{\Uparrow}'_{\!n{+}1}x$, take any $x_{()}\in{\Uparrow}_{\!n{+}1}x$. The definition of ${\Uparrow}_{\!n{+}1}x$ ensures that $X^1x_{()}X^1\cap ({\Uparrow}_{\!n}x)^2\ne\emptyset$ and hence $a_{()}x_{()}b_{()}=x_{(0)}x_{(1)}$ for some $a_{()},b_{()}\in X^1$ and $x_{(0)}x_{(1)}\in{\Uparrow}_{\!n}x={\Uparrow}_{\!n}'x$.  By the definition of the set ${\Uparrow}'_{\!n}x$, for every $k\in\{0,1\}$, there exist sequences $\{x_{k\hat{\;}s}\}_{s\in 2^{\le n}},\{y_{k\hat{\;}s}\}_{s\in 2^{\le n}}\subseteq X$ and $\{a_{k\hat{\;}s}\}_{s\in 2^{\le n}},\{b_{k\hat{\;}s}\}_{s\in 2^{\le n}}\subseteq X^1$ such that 
\begin{itemize}
\item $x_{k\hat{\;}s}=x$ for all $s\in 2^n$;
\item $y_{k\hat{\;}s}=a_{k\hat{\;}s}x_{k\hat{\;}s}b_{k\hat{\;}s}$ for every $s\in 2^{\le n}$;
\item $y_{k\hat{\;}s}=x_{k\hat{\;}s\hat{\;}0}x_{k\hat{\;}s\hat{\;}1}$ for every $s\in 2^{<n}$.
\end{itemize}
Then the sequences $\{x_s\}_{s\in 2^{\le n{+}1}},\{x_s\}_{s\in 2^{\le n{+}1}}\subseteq X$ and $\{a_s\}_{s\in 2^{\le n{+}1}},\{b_s\}_{s\in 2^{\le n{+}1}}\subseteq X^1$ witness that $x_{()}\in{\Uparrow}'_{\!n{+}1}x$, which completes the proof of the inclusion ${\Uparrow}_{\!n{+}1}x\subseteq {\Uparrow}_{\!n{+}1}'x$.
\smallskip 

To prove that ${\Uparrow}_{\!n{+}1}'x\subseteq {\Uparrow}_{\!n{+}1}x$, take any $x_{()}\in  {\Uparrow}_{\!n{+}1}'x$ and by the definition of ${\Uparrow}_{\!n{+}1}'x$, find sequences $\{x_s\}_{s\in 2^{\le n{+}1}},\{x_s\}_{s\in 2^{\le n{+}1}}\subseteq X$ and $\{a_s\}_{s\in 2^{\le n{+}1}},\{b_s\}_{s\in 2^{\le n{+}1}}\subseteq X^1$ satisfying the conditions $(1_{n+1})$--$(3_{n+1})$. The for every $k\in\{0,1\}$ the sequences $\{x_{k\hat{\;}s}\}_{s\in 2^{\le n}},\{x_{k\hat{\;}s}\}_{s\in 2^{\le n}}\subseteq X$ and $\{a_{k\hat{\;}s}\}_{s\in 2^{\le n}},\{b_{k\hat{\;}s}\}_{s\in 2^{\le n}}\subseteq X^1$ witness that $x_{(0)},x_{(1)}\in {\Uparrow}'_{\!n}={\Uparrow}_{\!n}x$ and then the equalities $a_{()}x_{()}b_{()}=y_{()}=x_{(0)}x_{(1)}\in({\Uparrow}_{\!n}x)^2$ imply that  $X^1x_{()}X^1\cap({\Uparrow}_{\!n}x)^2\ne\emptyset$ and hence $x_{()}\in {\Uparrow}_{\!n{+}1}x$, which completes the proof of the equality $ {\Uparrow}_{\!n{+}1}x={\Uparrow}_{\!n{+}1}'x$.
\end{proof}

A semigroup $X$ is called {\em duo} if $aX=Xa$ for every $a\in X$. Observe that each commutative semigroup is duo.

The upper $\two$-classes in duo semigroups have the following simpler description.

\begin{theorem}\label{t:duo} For any element $a\in X$ of a duo semigroup $X$ we have 
$${\Uparrow}a=\{x\in X:a^\IN\cap XxX\ne\emptyset\}.$$
\end{theorem}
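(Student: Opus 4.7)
The plan is to prove both inclusions of the claimed equality separately. Write $D\defeq\{x\in X:a^\IN\cap XxX\ne\emptyset\}$ for brevity.

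For the inclusion $D\subseteq{\Uparrow}a$, which does not actually require the duo hypothesis, I would argue directly from the definition of the binary quasiorder. Given $x\in D$ with $uxv=a^n$ for some $u,v\in X$ and $n\in\IN$, any homomorphism $\chi:X\to\two$ with $\chi(a)=1$ must satisfy $\chi(u)\chi(x)\chi(v)=\chi(a)^n=1$, forcing $\chi(x)=1$; if $\chi(a)=0$ then $\chi(a)\le\chi(x)$ is automatic. Hence $a\lesssim x$.

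For the converse ${\Uparrow}a\subseteq D$ I would verify that $D$ is a prime coideal of $X$ containing $a$ and then invoke Proposition~\ref{p:smallest-pi}. Membership $a\in D$ is immediate from $a^3=a\cdot a\cdot a\in XaX$. The complement $X\setminus D$ is an ideal: the trivial regroupings $u(xy)v=u\cdot x\cdot(yv)$ and $u(yx)v=(uy)\cdot x\cdot v$ yield $X(xy)X\cup X(yx)X\subseteq XxX$, so if $a^\IN$ is disjoint from $XxX$ it is also disjoint from $X(xy)X$ and $X(yx)X$. The only nontrivial ingredient — and the only place where the duo hypothesis is genuinely needed — is closure of $D$ under the semigroup operation: given $x,y\in D$ with $a^m=uxv$ and $a^n=u'yv'$, I multiply to get $a^{m+n}=ux(vu')yv'$ and then use the duo identity $Xy=yX$ to produce some $s\in X$ with $(vu')y=ys$; regrouping now gives $a^{m+n}=u(xy)(sv')\in X(xy)X$, so $xy\in D$.

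The main obstacle is precisely this last manipulation, where duo must be invoked to shuttle the factor $vu'$ across $y$ so that $x$ and $y$ end up adjacent; without this one cannot combine the two witnesses $a^m\in XxX$ and $a^n\in XyX$ into a single witness $a^{m+n}\in X(xy)X$. Once $D$ has been certified as a prime coideal containing $a$, Proposition~\ref{p:smallest-pi} immediately delivers ${\Uparrow}a\subseteq D$ and completes the proof.
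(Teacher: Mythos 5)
Your proposal is correct and follows essentially the same route as the paper: both proofs establish ${\Uparrow}a\subseteq D$ by verifying that $D=\{x\in X:a^\IN\cap XxX\ne\emptyset\}$ is a prime coideal containing $a$ and invoking the minimality of ${\Uparrow}a$ from Proposition~\ref{p:smallest-pi}, with the duo hypothesis used only for closure of $D$ under multiplication. The only cosmetic differences are that you prove $D\subseteq{\Uparrow}a$ by a direct computation with homomorphisms $\chi:X\to\two$ (the paper uses the coideal properties of ${\Uparrow}a$ instead), and you apply duo to shuttle $vu'$ past $y$ rather than to reduce $X^1xX^1$ to one-sided products as the paper does; your handling of the witnesses in $XxX$ is in fact slightly more careful than the paper's.
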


\begin{proof} First we prove that the set $\frac{a^\IN}X\defeq \{x\in X:a^\IN\cap XxX\ne\emptyset\}$ is contained in ${\Uparrow}a$. In the opposite case, we can find a point $x\in\frac{a^\IN}{X}\setminus {\Uparrow}a$. Taking into account that ${\Uparrow}a$ is a coideal containing $a$, we conclude that $a^\IN\subseteq{\Uparrow}a$ and $\emptyset= XxX\cap{\Uparrow}a\supseteq XxX\cap a^\IN$, which contradicts the choice of the point $x\in\frac{a^\IN}X$. This contradiction shows that $\frac{a^\IN}X\subseteq {\Uparrow}a$.

Next, we prove that $\frac{a^\IN}X$ is prime coideal. Since $X$ is a duo semigroup, for every $x\in X$ we have $X^1x=xX^1=X^1xX^1$. If $x,y\in \frac{a^\IN}X$, then $$X^1x\cap a^\IN=X^1xX^1\cap a^\IN\ne\emptyset\ne X^1yX^1\cap a^\IN=yX^1\cap a^\IN$$ and hence $X^1xyX^1\in a^\IN\ne\emptyset$, which means that $xy\in\frac{a^\IN}X$. Therefore, $\frac{a^\IN}X$ is a subsemigroup of $X$. The definition of $\frac{a^\IN}X$ ensures that $X\setminus \frac{a^\IN}X$ is an ideal in $X$. Then $\frac{a^\IN}X\subseteq{\Uparrow}a$ is a prime coideal in $X$ and $\frac{a^\IN}X={\Uparrow}a$, by the minimality of ${\Uparrow}a$.
\end{proof}

For viable semigroups Putcha and Weissglass \cite{PW} proved the following simplification of Proposition~\ref{p:Upclass}. Following Putcha and Weissglass  \cite{PW}, we define a semigroup $X$ to be {\em viable} if for any elements $x,y\in X$ with $\{xy,yx\}\subseteq E(X)$, we have $xy=yx$. For various equivalent conditions to the viability, see \cite{Ban}.

\begin{proposition}[Putcha--Weissglass]\label{p:PW} If $X$ is a viable semigroup, then for every idempotent $e\in E(X)$we have ${\Uparrow}e=\{x\in X:e\in X^1xX^1\}$.
\end{proposition}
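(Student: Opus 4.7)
The plan is to prove both inclusions of the claimed equality separately. The inclusion $\{x\in X : e\in X^1xX^1\}\subseteq {\Uparrow}e$ is a direct consequence of the fact that $X\setminus{\Uparrow}e$ is an ideal in $X$: if some $x$ lies outside ${\Uparrow}e$, then $X^1xX^1\subseteq X\setminus{\Uparrow}e$, which contradicts $e\in X^1xX^1$ since $e\in{\Uparrow}e$.

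For the reverse inclusion, I would introduce $A=\{x\in X : e\in X^1xX^1\}$ and verify that $A$ is a prime coideal containing $e$; the minimality of ${\Uparrow}e$ (the smallest such coideal, by Proposition~\ref{p:smallest-pi}) then yields ${\Uparrow}e\subseteq A$. The conditions $e\in A$ and the fact that $X\setminus A$ is an ideal are immediate. The heart of the argument is showing that $A$ is a subsemigroup, which is where viability enters.

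Given $x,y\in A$ with $e=u_1xv_1=u_2yv_2$, I would set $\alpha=u_1x$, $\beta=v_1u_2$, $\gamma=yv_2$, so that $e=\alpha\beta\gamma$, and exploit the absorption identities $\beta\gamma=v_1e$ and $\alpha\beta=eu_2$. These make the rotations $\beta\gamma\alpha$ and $\gamma\alpha\beta$ into idempotents, since, for instance,
\[
(\beta\gamma\alpha)^2 = \beta\gamma\cdot(\alpha\beta\gamma)\cdot\alpha = v_1e\cdot e\cdot\alpha = v_1e\alpha = \beta\gamma\alpha.
\]
Viability, applied to the pairs $(\alpha,\beta\gamma)$ and $(\alpha\beta,\gamma)$ whose two products are both idempotents, then forces $\beta\gamma\alpha=e$ and $\gamma\alpha\beta=e$. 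Running the entire construction with the roles of $x$ and $y$ interchanged (i.e.\ $\alpha'=u_2y$, $\beta'=v_2u_1$, $\gamma'=xv_1$) yields the twin identities $\beta'\gamma'\alpha'=e$ and $\gamma'\alpha'\beta'=e$. Setting $z=v_1u_2yv_2u_1$ and $w=v_2u_1xv_1u_2$, these four rotation identities translate precisely into $xz=zx=e$ and $yw=wy=e$, so that $z$ and $w$ play the role of ``inverses of $x$ and $y$ modulo~$e$''.

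With these in hand, closure of $A$ under products is a one-line computation. Multiplying $xz=e$ on the left by $z$ and invoking $zx=e$ gives $ez=ze$, after which
\[
(xy)(wz) = x(yw)z = x\cdot e\cdot z = (xz)e = e\cdot e = e,
\]
exhibiting $e\in X^1\cdot xy\cdot X^1$, so $xy\in A$. The main obstacle is conceptual rather than computational: one has to find the right three-factor packaging of $e$ that makes the rotated products absorb an $e$ cleanly, and then apply viability twice (on the two symmetric packagings) to extract the quasi-inverses $z,w$; once these are produced, the final cancellation is automatic.
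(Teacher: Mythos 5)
Your proof is correct and rests on the same mechanism as the paper's: show $\{x\in X:e\in X^1xX^1\}$ is a prime coideal and invoke minimality of ${\Uparrow}e$, with the subsemigroup property obtained by applying viability to a factorization of $e$ whose reversed product is proved idempotent via an absorption computation. The paper gets there slightly more directly — two applications of viability (rotating $axb=e$ to $beax=e$ and $cyd=e$ to $ydec=e$) followed by the one-line concatenation $e=(beax)(ydec)\in X^1(xy)X^1$ — whereas you spend four applications to manufacture two-sided quasi-inverses $z,w$, but this is only a cosmetic difference.
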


\begin{proof} We present a short proof of this theorem, for convenience of the reader. Let ${\Uparrow}_{\!1}e\defeq \{x\in X:e\in X^1xX^1\}$. By Proposition~\ref{p:Upclass}, ${\Uparrow}_{\!1}e\subseteq {\Uparrow}e$. The reverse inclusion will follow from the minimality of the prime coideal ${\Uparrow}e$ as soon as we prove that ${\Uparrow}_{\!1}e$ is a prime coideal in $X$. It is clear from the definition that ${\Uparrow}_{\!1}e$ is a coideal. So, it remains to check that ${\Uparrow}_{\!1}e$ is a subsemigroup. Given any elements $x,y\in {\Uparrow}_{\!1}e$, find elements $a,b,c,d\in X^1$ such that $axb=e=cyd$. Then $axbe=ee=e$ and $(beax)(beax)=be(axbe)ax=beeax=beax$, which means that $beax$ is an idempotent. By the viability of $X$, $axbe=e=beax$. By analogy we can prove that $ecyd=e=ydec$. Then $aeaxydex=ee=e$ and hence $xy\in{\Uparrow}_{\!1}e$.
\end{proof}

Proposition~\ref{p:PW} has an important corollary, proved in \cite{PW}.

\begin{corollary}[Putcha--Wiessglass] If $X$ is a viable semigroup, then for every $x\in X$ its $\two$-class ${\Updownarrow}x$ contains at most one idempotent.
\end{corollary}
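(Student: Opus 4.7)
The plan is, given two idempotents $e,f\in E(X)\cap{\Updownarrow}x$, to derive the absorption identities $ef=e$ and $fe=f$, at which point viability closes the argument in one line. Since $e\Updownarrow f$ we have both $e\lesssim f$ and $f\lesssim e$, i.e.\ $f\in{\Uparrow}e$ and $e\in{\Uparrow}f$. By Proposition~\ref{p:PW} this translates to $e\in X^1fX^1$ and $f\in X^1eX^1$, so I would fix factorizations $e=afb$ and $f=ced$ with $a,b,c,d\in X^1$.

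The key step is to reapply, inside this corollary, the very trick already used in the proof of Proposition~\ref{p:PW}. That argument shows: whenever $axb$ equals an idempotent $e$, the products $(ax)(be)=axbe=ee=e$ and $(be)(ax)=beax$ are both idempotents, so by viability $e=beax$. Applying this with $x=f$ to the factorization $e=afb$ yields $e=beaf$, and symmetrically $f=dfce$. Using $f^2=f$ and $e^2=e$, absorption then gives
\[
ef=(beaf)\,f=bea\,f^2=beaf=e, \qquad fe=(dfce)\,e=dfc\,e^2=dfce=f.
\]
Thus $ef=e\in E(X)$ and $fe=f\in E(X)$, so one final invocation of viability forces $ef=fe$, which is exactly $e=f$.

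I do not anticipate any real obstacle once Proposition~\ref{p:PW} is available: the only subtle point is the observation that the same viability-plus-idempotent computation that produced the representation in Proposition~\ref{p:PW} also produces, when fed back into itself, the two absorption identities that make the corollary collapse. Everything else is routine manipulation of the relations $e^2=e$ and $f^2=f$.
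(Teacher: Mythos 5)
Your proof is correct, and it diverges from the paper's in the final step in a way worth noting. Both arguments begin identically: use Proposition~\ref{p:PW} to write $e=afb$ and $f=ced$ with $a,b,c,d\in X^1$, and then run the viability trick ($axbe=ee=e$ and $(beax)^2=beax$, hence $axbe=beax$ by viability) to obtain $e=beaf$ and $f=dfce$. The paper then derives two further pairs of such identities and uses them to conclude that $e$ and $f$ are $\mathcal H$-equivalent, i.e.\ $H_e=H_f$, so $e=f$ because a maximal subgroup contains a unique idempotent; this leans on Green's relations and Corollary 2.2.6 of Howie. You instead feed the two identities back into the idempotency of $e$ and $f$ to get the absorption laws $ef=(beaf)f=beaf=e$ and $fe=(dfce)e=dfce=f$, and then apply viability once more to the pair $e,f$ (since $\{ef,fe\}\subseteq E(X)$) to force $ef=fe$, i.e.\ $e=f$. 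Your endgame is more elementary and self-contained: it needs only half of the identities the paper derives, avoids $\mathcal H$-classes entirely, and uses nothing beyond the viability hypothesis itself. What the paper's version buys in exchange is the explicit structural conclusion $H_e=H_f$ along the way. All of your computations check out.
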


\begin{proof} To derive a contradiction, assume that the semigroup ${\Updownarrow}x$ contains two distinct idempotents $e,f$. By Proposition~\ref{p:PW}, there are elements $a,b,c,d\in X^1$ such that $e=afb$ and $f=ced$. Observe that $afbe=ee=e$ and $(beaf)(beaf)=be(afbe)af=beeaf=beaf$ and hence $afbe$ and $beaf$ are idempotents. The viability of $X$ ensures that $afbe=beaf$. By analogy we can prove that $eafb=e=efbea$, $cedf=f=dfce$ and $fced=f=edfc$. These equalities imply that $H_e=H_f$ and hence $e=f$ because the group $H_e=H_f$ contains a unique idempotent. But the equality $e=f$ contradicts the choice of the idempotents $e,f$.
\end{proof}

\section{The structure of $\two$-trivial semigroups}

Tamura's Theorem~\ref{t:Tamura} motivates the problem of a deeper study of the structure of $\two$-trivial semigroups. This problem has been considered in the literature, see, e.g. \cite[\S3]{Petrich64}.  Proposition~\ref{p:smallest-pi} implies the following simple characterization of $\two$-trivial semigroups.

\begin{theorem}\label{t:primesimple} A semigroup $X$ is $\two$-trivial if and only if every nonempty prime ideal in $X$ coincides with $X$.
\end{theorem}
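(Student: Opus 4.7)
The plan is to exploit the bijective correspondence, recorded just before Proposition~\ref{p:smallest-pi}, between homomorphisms $\chi:X\to\two$ and prime coideals of $X$: to each homomorphism $\chi$ we assign the prime coideal $A=\chi^{-1}(1)$, whose complement $I=X\setminus A=\chi^{-1}(0)$ is a prime ideal. Conversely, the characteristic function $\chi_A$ of any prime coideal $A$ is a homomorphism to $\two$. Since prime (co)ideals are explicitly allowed to be empty or equal to $X$, this correspondence is a bijection between $\mathrm{Hom}(X,\two)$ and the set of prime ideals of $X$.

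Under this correspondence, a homomorphism $\chi$ is constant precisely when $\chi^{-1}(0)\in\{\emptyset,X\}$, i.e., when the associated prime ideal is either $\emptyset$ or $X$. Consequently, $X$ admits a \emph{nonconstant} homomorphism to $\two$ if and only if $X$ admits a prime ideal $I$ with $\emptyset\subsetneq I\subsetneq X$.

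Taking contrapositives, $X$ is $\two$-trivial if and only if no prime ideal $I$ with $\emptyset\subsetneq I\subsetneq X$ exists, which is exactly the statement that every nonempty prime ideal of $X$ coincides with $X$.

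The proof is thus a direct translation via the coideal/homomorphism dictionary; no real obstacle arises, since the substantive work (that $\chi_A$ is a homomorphism iff $A$ is a prime coideal) is already built into the definitions given in the paper. I would write the argument up as a short two-paragraph proof, one paragraph per implication, for the sake of clarity.
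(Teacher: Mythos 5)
Your proof is correct and follows essentially the same route as the paper, which derives the theorem from the observation (recorded before Proposition~\ref{p:smallest-pi}) that prime coideals are exactly the sets $\chi^{-1}(1)$ for homomorphisms $\chi:X\to\two$; the paper merely phrases the step through Proposition~\ref{p:smallest-pi} (${\Uparrow}x$ is the smallest prime coideal containing $x$, so $\two$-triviality is equivalent to ${\Uparrow}x=X$ for all $x$), while you invoke the homomorphism/prime-ideal dictionary directly. Nothing is missing.
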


Observe that a semigroup $X$ is $\two$-trivial if and only if $X={\Uparrow}x$ for every $x\in X$. This observation and Propositions~\ref{p:Upclass} and \ref{p:Upclass-tree} imply the following characterization.

\begin{proposition} A semigroup $X$ is $\two$-trivial if and only if for every $x,y\in X$ there exists $n\in\IN$ and  sequences $\{a_{s}\}_{s\in 2^{\le n}},\{b_s\}_{s\in 2^{\le n}}\subseteq X^1$ and $\{x_s\}_{s\in 2^{\le n}},\{y_s\}_{s\in 2^{\le n}}\subseteq X$ satisfying the following conditions:
\begin{enumerate}
\item $x_s=x$ for all $s\in 2^n$;
\item $y_s=a_sx_sb_s$ for every $s\in 2^{\le n}$;
\item $y_s=x_{s\hat{\;}0}x_{s\hat{\;}1}$ for every $s\in 2^{<n}$;
\item $x_{()}=y$ for the unique element $()$ of $2^0$;
\end{enumerate}
\end{proposition}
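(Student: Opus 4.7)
The plan is to derive this essentially for free from the machinery already developed in this section. The observation immediately preceding the proposition states that $X$ is $\two$-trivial if and only if ${\Uparrow}x = X$ for every $x \in X$. So the whole task is to expand the statement ``for all $x,y \in X$, $y \in {\Uparrow}x$'' into the concrete combinatorial form $(1)$--$(4)$.

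First I would invoke Proposition~\ref{p:Upclass} to rewrite $y \in {\Uparrow}x$ as the existence of some $n \in \w$ with $y \in {\Uparrow}_{\!n}x$. Then I would apply Proposition~\ref{p:Upclass-tree}, which identifies ${\Uparrow}_{\!n}x$ with ${\Uparrow}_{\!n}'x$ for $n \in \IN$; membership in ${\Uparrow}_{\!n}'x$ is literally the existence of sequences $\{x_s\},\{y_s\},\{a_s\},\{b_s\}$ satisfying exactly the four conditions listed. Combining these two propositions yields both implications of the stated equivalence.

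The only minor nuisance is the boundary case $n = 0$ in Proposition~\ref{p:Upclass}, since the statement quantifies $n$ over $\IN = \w \setminus \{0\}$. If $y \in {\Uparrow}_{\!0}x = \{x\}$, then $y = x$, and I would explicitly exhibit a witness at level $n = 1$: set $x_{(0)} = x_{(1)} = x$, $x_{()} = y = x$, $y_{()} = x^2$, choose $a_{()} = x$ and $b_{()} = 1 \in X^1$, and fill in the slots at $s = (0),(1)$ trivially with $a_{(i)} = b_{(i)} = 1$, $y_{(i)} = x$. All four conditions are satisfied, so the case $n=0$ collapses into the case $n=1$ without loss.

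I do not expect any real obstacle here; the proposition is a direct translation of the definition of $\two$-triviality through the chain ${\Uparrow}x = \bigcup_n {\Uparrow}_{\!n}x = \bigcup_n {\Uparrow}_{\!n}'x$, and the only thing that needs any care at all is the trivial reindexing from $\w$ to $\IN$.
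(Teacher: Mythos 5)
Your proposal is correct and is exactly the paper's intended argument: the paper gives no separate proof, stating only that the characterization follows from the observation that $\two$-triviality means ${\Uparrow}x=X$ for all $x$, combined with Propositions~\ref{p:Upclass} and \ref{p:Upclass-tree}. Your explicit witness for the $n=0$ boundary case is a small but sound addition (it also follows from the monotonicity of the sets ${\Uparrow}_{\!n}x$ established in Proposition~\ref{p:Upclass}).
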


A semigroup $X$ is called {\em Archimedean} if for any elements $x,y\in X$ there exists $n\in\IN$ such that $x^n\in XyX$ for some $a,b\in X$. A standard example of an Archimedean semigroup is the additive semigroup $\IN$ of positive integers. For commutative semigroups the following characterization was obtained by Tamura and Kimura in \cite{TK54}.

\begin{theorem}\label{t:Archimed} A duo semigroup $X$ is $\two$-trivial if and only if $X$ is Archimedean.
\end{theorem}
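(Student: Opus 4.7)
The plan is to derive this characterization essentially for free from Theorem~\ref{t:duo} together with the reformulation of $\two$-triviality in terms of the upper $\two$-classes. The guiding observation is that a semigroup $X$ is $\two$-trivial precisely when ${\Uparrow}a=X$ for every $a\in X$, since $X$ is $\two$-trivial iff $a\lesssim x$ holds for all $a,x\in X$.

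First I would record the equivalence
$${\Uparrow}a=X\iff\forall x\in X\ \exists n\in\IN\ (a^n\in XxX),$$
which is nothing but a direct rewriting of Theorem~\ref{t:duo} (valid because $X$ is duo): ${\Uparrow}a=\{x\in X:a^\IN\cap XxX\neq\emptyset\}$, so ${\Uparrow}a$ equals $X$ exactly when every $x\in X$ satisfies $a^\IN\cap XxX\neq\emptyset$, i.e.\ when some positive power of $a$ lies in $XxX$.

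For the forward implication, assume $X$ is $\two$-trivial. Then for any $x,y\in X$ we have ${\Uparrow}x=X\ni y$, and the equivalence above (applied with $a=x$) yields some $n\in\IN$ with $x^n\in XyX$; hence $X$ is Archimedean. For the reverse implication, assume $X$ is Archimedean. Then for every $a\in X$ and every $x\in X$ there is some $n\in\IN$ with $a^n\in XxX$, so $a^\IN\cap XxX\neq\emptyset$ and, by Theorem~\ref{t:duo}, $x\in{\Uparrow}a$. As $x$ was arbitrary, ${\Uparrow}a=X$, and as $a$ was arbitrary, $X$ is $\two$-trivial.

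There is no real obstacle here; all the substantive content is already packed into Theorem~\ref{t:duo} (construction of the prime coideal $\frac{a^\IN}X$ inside a duo semigroup), and the present theorem is essentially a dictionary translation between the language of upper $\two$-classes and the language of the classical Archimedean condition of Tamura--Kimura.
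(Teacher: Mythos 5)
Your proof is correct and follows essentially the same route as the paper: both directions are read off directly from Theorem~\ref{t:duo} via the observation that $\two$-triviality is equivalent to ${\Uparrow}a=X$ for all $a\in X$. In fact your write-up is slightly cleaner than the paper's, which contains a couple of typographical slips in exactly this argument.
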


\begin{proof} If $X$ is $\two$-trivial, then by Theorem~\ref{t:duo}, for every $x,y\in X$ there exists $n\in\w$ such that $x^n\in XyX$, which means that $X$ is Archimedean.

If $X$ is Archimedean, then for every $\in X$, we have
$${\Uparrow}x=\{y\in X:x^\IN\cap (XxX)\ne\emptyset\}=X,$$  see Theorem~\ref{t:duo}, which means that the semigroup $X$ is $\two$-trivial.
\end{proof}

Following Tamura \cite{Tamura82}, we define a semigroup $X$ to be {\em unipotent} if $X$ contains a unique idempotent.

\begin{theorem}[Tamura, 1982]\label{t:max-ideal} For the unique idempotent $e$ of an unipotent $\two$-trivial semigroup $X$, the maximal group $H_e$ of $e$ in $X$ is an ideal in $X$. 
\end{theorem}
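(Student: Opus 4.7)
The plan is to reduce the statement to the claim that $xe\in H_e$ and $ex\in H_e$ for every $x\in X$, and then, using the factorisation of $e$ guaranteed by viability, to exhibit an explicit two-sided inverse of $xe$ (and of $ex$) in $H_e$.

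First I would note that any unipotent semigroup is automatically viable: the only way to have $\{xy,yx\}\subseteq E(X)=\{e\}$ is $xy=e=yx$, whence $xy=yx$ trivially. Combining Proposition~\ref{p:PW} with $\two$-triviality (which gives $X={\Uparrow}e$), every $x\in X$ satisfies $e\in X^1xX^1$, i.e.\ $axb=e$ for some $a,b\in X^1$. Once $xe,ex\in H_e$ is established, each $h\in H_e$ satisfies $h=eh=he$, and so $xh=(xe)h\in H_eH_e=H_e$ and $hx=h(ex)\in H_e$, giving the ideal property.

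To prove $xe,ex\in H_e$ from $axb=e$, the corner cases $a=1$ or $b=1$ are straightforward: if $xb=e$, then $(bex)^2=be(xb)ex=be\cdot e\cdot ex=bex$, so $bex=e$ by unipotency, and combined with $xbe=(xb)e=e$ this makes $x$ and $be$ mutual inverses, placing $x\in H_e$ and hence $xe,ex\in H_e$. In the principal case $a,b\in X$, multiplying $axb=e$ by $e$ on each side gives $axbe=eaxb=e$; then $(beax)^2=be(axbe)ax=be\cdot e\cdot ax=beax$ and $(xbea)^2=xb(eaxb)ea=xb\cdot e\cdot ea=xbea$ are both idempotent, so $beax=xbea=e$ by unipotency. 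This yields $(ax)(be)=(be)(ax)=e$ and $(ea)(xb)=(xb)(ea)=e$, placing $ax,be,ea,xb\in H_e$ and producing the identity relations $axe=ax$, $eax=ax$, $ebe=be$, $eae=ea$ (since $e$ is the identity of $H_e$).

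Using these relations, $bea$ inverts $xe$ on both sides: $(xe)(bea)=x(ebe)a=x(be)a=xbea=e$ and $(bea)(xe)=(be)(axe)=(be)(ax)=beax=e$; the parallel computation $(ex)(bea)=e\cdot xbea=e\cdot e=e$ and $(bea)(ex)=b(eae)x=b(ea)x=beax=e$ gives $ex\in H_e$. The main obstacle is to identify the correct candidate inverse ($bea$) and to arrange the four products so that they collapse, via the $H_e$-identity relations, onto the two already-established idempotent equalities $xbea=e$ and $beax=e$.
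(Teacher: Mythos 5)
Your overall strategy is sound and genuinely different from the paper's: you go through viability and Proposition~\ref{p:PW} to obtain $e\in X^1xX^1$ for every $x\in X$ and then compute explicit inverses, whereas the paper argues by contradiction, showing that $I=\{x\in X:\{ex,xe\}\not\subseteq H_e\}$ would be a nonempty proper prime ideal, which is impossible in a $\two$-trivial semigroup. Your reduction to ``$xe,ex\in H_e$ for all $x$'' is correct, and the idempotency computations ($beax$ and $xbea$ are idempotent, hence equal to $e$ by unipotency) are fine.

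The gap is the repeated inference that $pq=qp=e$ places $p$ and $q$ in $H_e$. This is false, even under the hypotheses of the theorem: in the two-element null semigroup $X=\{e,u\}$ with all products equal to $e$, $X$ is unipotent and $\two$-trivial, $uu=e$, yet $H_e=\{e\}$ and $u\notin H_e$ (indeed $uX^1=\{e,u\}\ne\{e\}=eX^1$). Membership in $H_e$ requires in addition that the element lie in $eXe$, i.e.\ $ep=pe=p$. This invalidates your corner case (``placing $x\in H_e$'': in the example above, $x=u$, $b=u$ realizes exactly this situation with $x\notin H_e$) and, in the main case, the claims $ax\in H_e$ and $xb\in H_e$ together with the derived identities $axe=ax=eax$, on which your verification $(bea)(xe)=(be)(axe)=\cdots$ depends. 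The last step has the same defect: $(xe)(bea)=(bea)(xe)=e$ alone does not put $xe$ into $H_e$. The argument is repairable: from $wz=zw=e$ one gets $we=ew$ and $ze=ez$, hence $ewe$ and $eze$ are mutually inverse in the monoid $eXe$ and therefore lie in $H_e$; since $(xe)e=xe$ and $e(ex)=ex$, this does yield $xe,ex\in H_e$, and the products $(bea)(xe)=(beax)e=e$ and $(ex)(bea)=e(xbea)=e$ can be checked without the unjustified identities (only $ebe=be$ and $eae=ea$, which do hold, are needed for the other two). But as written, the proof passes through intermediate statements that are false in general.
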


\begin{proof} This theorem was proved by Tamura in \cite{Tamura82}. We present here an alternative (and direct) proof. To derive a contradiction, assume that $H_e$ is not an ideal in $X$. Then the set $I\defeq\{x\in X:\{ex,xe\}\not\subseteq H_e\}$ is not empty. We claim that $I$ is an ideal in $X$. Assuming the opposite, we could find $x\in I$ and $y\in X$ such that $xy\notin I$ or $yx\notin I$.

If $xy\notin I$, then $\{exy,xye\}\subseteq H_e$. Taking into account that $exy$ and $xye$ are elements of the group $H_e$, we conclude that $exy=exye=xye$. 
Let $g$ be the inverse element to $xye$ in the group $H_e$. Then $exyg=xyeg=xyg=e$. Replacing $y$ by $yg$, we can assume that $ye=y$ and $xy=e$. Observe that $yxyx=y(xy)x=yex=(ye)x=yx$, which means that $yx$ is an idempotent in $S$. Since $e$ is a unique idempotent of the semigroup $X$, $yx=e=xy$. It follows that $xe=x(yx)=(xy)x=ex$ and $ey=(yx)y=y(xy)=ye=y$. Using this information it is easy to show that $xe=ex\in H_e$. By analogy we can show that the assumption $yx\notin I$ implies $ex=xe\in H_e$. So, in both cases we obtain $ex=xe\in H_e$,  which contradicts the choice of $x\in I$.

This contradiction shows that $I$ is an ideal in $S$. Observe that for any $x,y\in X\setminus I$ we have $\{ex,xe,ey,ye\}\subseteq H_e$. Then also $xye=x(eye)=(xe)(ye)\in H_e$ and $exy=(exe)y=(ex)(ey)\in H_e$, which means that $xy\in X\setminus I$ and hence $I$ is a nontrivial prime ideal in $X$. But the existence of such an ideal contradicts the $\two$-triviality of $X$.
 \end{proof}
 
 An element $z$ of a semigroup $X$ is called {\em central} if $zx=xz$ for all $x\in X$.
 
\begin{corollary}\label{c:EZK}The unique idempotent $e$ of a unipotent $\two$-trivial semigroup $X$ is central in $X$.
\end{corollary}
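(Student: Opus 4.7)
The plan is to deduce this as an immediate corollary of Theorem~\ref{t:max-ideal}. Since $X$ is a unipotent $\two$-trivial semigroup with unique idempotent $e$, Theorem~\ref{t:max-ideal} tells us that the maximal subgroup $H_e$ is an ideal in $X$. In particular, for every $x\in X$ both $ex$ and $xe$ belong to $H_e$, because $e\in H_e$ and $H_e$ absorbs multiplication by elements of $X$ on both sides.

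Now I would exploit the fact that $H_e$ is a group whose identity is $e$ (see Corollary 2.2.6 in \cite{Howie}, quoted in the preliminaries). Since $ex\in H_e$, the group identity $e$ acts trivially on it on the right, giving $(ex)e=ex$, that is, $exe=ex$. Symmetrically, since $xe\in H_e$, we get $e(xe)=xe$, that is, $exe=xe$. Combining these two equalities yields $ex=exe=xe$, which is the desired centrality.

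There is essentially no obstacle here beyond invoking the preceding theorem and using the group structure of $H_e$; the only thing to be careful about is that the two identities $(ex)e=ex$ and $e(xe)=xe$ really use the group identity property inside $H_e$ rather than any assumption about $e$ being central already (which would be circular). With that observation, the proof is a one-line application of Theorem~\ref{t:max-ideal}.
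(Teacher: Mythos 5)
Your proof is correct and is essentially identical to the paper's own argument: both invoke Theorem~\ref{t:max-ideal} to get that $H_e$ is an ideal, conclude $ex,xe\in H_e$, and then use that $e$ is the identity of the group $H_e$ to obtain $ex=exe=xe$. No issues.
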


\begin{proof} Let $e$ be a unique idempotent of the unipotent semigroup $X$. By Tamura's Theorem~\ref{t:max-ideal}, the maximal subgroup $H_e$ of $e$ is an ideal in $X$. Then for every $x\in X$ we have $xe,ex\in H_e$. Taking into account that $xe$ and $ex$ are elements of the group $H_e$, we conclude that $ex=exe=xe$. This means that the idempotent $e$ is central in $X$. 
\end{proof}

As we already know a semigroup $X$ is $\two$-trivial if and only if each nonempty prime ideal in $X$ is equal to $X$. 

A semigroup $X$ is called
\begin{itemize}
\item {\em simple} if every nonempty ideal in $X$ is equal to $X$;
\item {\em $0$-simple} if contains zero element $0$, $XX\ne\{0\}$ and every nonempty ideal in $X$ is equal to $X$ or $\{0\}$; 
\item {\em congruence-free} if every congruence on $X$ is equal to $X\times X$ or $\Delta_X\defeq\{(x,y)\in X\times X:x=y\}$.
\end{itemize}

It is clear that a semigroup $X$ is $\two$-trivial if $X$ is either simple or congruence-free. On the other hand the additive semigroup of integers $\mathbb N$ is $\two$-trivial but not simple.

\begin{remark}\label{r:Bard} By \cite{ACMU}, \cite{CM}, there exists an infinite $0$-simple congruence-free monoid $X$. Being congruence-free, the semigroup $X$ is $\two$-trivial. On the other hand, $X$ contains at least two central idempotents: $0$ and $1$. The $\two$-trivial monoid $X$ is not unipotent and its center $Z(X)=\{z\in X:\forall x\in X\;(xz=zx)\}$ is not $\two$-trivial. The polycyclic monoids (see \cite{BG1}, \cite{BG2}, \cite{Bard16}, \cite{Bard20}) have the similar properties. By Theorem 2.4 in \cite{BG1}, for $\lambda\ge 2$ the polycyclic monoid $P_\lambda$ is congruence-free and hence $\two$-trivial, but its center $Z(P_\lambda)=\{0,1\}$ is not $\two$-trivial.  
\end{remark}

\section{Acknowledgements} The authors express their sincere thanks to Oleg Gutik and Serhii Bardyla for valuable information on congruence-free monoids (see Remark~\ref{r:Bard}) and to all listeners of Lviv Seminar in Topological Algebra (especially, Alex Ravsky) for active listening of the talk of the first named author that allowed to notice and then correct a crucial gap in the initial version of this manuscript.

\end{document}